\documentclass[11pt]{article}
\usepackage[a4paper,margin=1in]{geometry}
\usepackage[T1]{fontenc}
\usepackage[utf8]{inputenc}
\usepackage{microtype}
\usepackage{setspace}
\usepackage{parskip}
\usepackage{amsmath,amssymb,amsthm,amsfonts,mathtools}
\usepackage{mathrsfs}
\usepackage{bm}
\usepackage[all]{xy}
\usepackage{graphicx}
\usepackage{tikz}
\usepackage{enumitem}
\usepackage{booktabs}
\usepackage{array}
\usepackage[nameinlink,noabbrev]{cleveref}
\usepackage{mathpazo}

\usepackage{titlesec}

\titleformat{\section}
  {\large\bfseries}
  {\thesection.}{0.5em}{}
\titleformat{\subsection}
  {\normalsize\bfseries}
  {\thesubsection.}{0.5em}{}
\titleformat{\subsubsection}
  {\normalsize\itshape}
  {\thesubsubsection.}{0.5em}{}

\newtheoremstyle{pretty}
  {1em}{1em}
  {\itshape}
  {}
  {\bfseries}
  {.}
  {0.5em}
  {}
\newtheoremstyle{plainnote}
  {1em}{1em}
  {\normalfont}
  {}
  {\bfseries}
  {.}
  {0.5em}
  {}

\theoremstyle{pretty}
\newtheorem{theorem}{Theorem}[section]
\newtheorem{lemma}[theorem]{Lemma}
\newtheorem{proposition}[theorem]{Proposition}
\newtheorem{corollary}[theorem]{Corollary}

\theoremstyle{plainnote}

\newtheorem{remark}[theorem]{Remark}

\newcommand{\C}{\mathbb{C}}
\renewcommand{\2}{\mathbf{2}}

\newcommand{\V}{\mathcal{V}}

\newcommand{\Hoop}{\mathbf{Hp}}
\newcommand{\HeytingSemilattices}{\mathbf{HSL}}

\newcommand{\BoundedHoop}{\mathbf{BHp}}
\newcommand{\BoundedHeytingSemilattices}{\mathbf{BHSL}}

\title{
  \vspace{-2em}
  {\Huge\bfseries A note on coextensivity of bounded hoops}\\[1em]
  \rule{0.75\textwidth}{0.6pt}
}
\author{Michael Hoefnagel,  Danielle Kleyn, Giuseppe Metere} 
\date{\today}

\begin{document}

\maketitle

\begin{abstract}
The main aim of this note is to provide a characterisation of coextensive morphisms in the category of bounded hoops. This characterisation is then used to show that several categories of bounded hoops are coextensive as categories. Among these are the variety of bounded Wajsberg hoops, or more generally the variety of bounded $\vee$-hoops. Our characterisation also recovers the known coextensivity of the category $\mathbf{Heyt}$ of Heyting algebras and the category $\mathbf{MV}$ of MV-algebras.
\end{abstract}

\noindent\textbf{2020 Mathematics Subject Classification.}
18B50, 08C05, 06F05, 03G25.

\noindent\textbf{Keywords.}
Coextensive categories, coextensive morphisms,  bounded hoops, BL-algebras, MV-algebras.

\section{Introduction}
Hoops were introduced by Bosbach under the name \emph{complementary semigroups} \cite{Bosbach1969}. We use the axiomatization of \cite{BlokFerreirim2000StructureHoops}: a \emph{hoop} is an algebra $\mathbf{H}=(H,\cdot,\to,1)$ such that $(H,\cdot,1)$ is a commutative monoid and for all $x,y,z\in H$,
\begin{align*}
    x \to x &= 1,\\
      x \to (y \to z) &= (x\cdot y)\to z,\\
      (x\to y)\cdot x &= (y\to x)\cdot y.
    \end{align*}
Given a hoop $\mathbf{H}=(H,\cdot,\to,1)$, there is a partial order on $H$ defined by
\[
x \leqslant y \iff x \to y = 1.
\]
This relation is a partial order on $H$, called the \emph{natural order} of the hoop. With respect to this ordering, every hoop admits meets, which are equationally defined by
\[
x \wedge y = (y\to x)\cdot y.
\]
Not all hoops admit joins; however, the term 
\[
x \sqcup y = \big ((x \to y) \to y \big ) \wedge \big ((y \to x) \to x \big )
\]
defines what is called the \emph{pseudojoin}. A hoop is called a $\vee$-hoop if the pseudojoin represents the join with respect to the natural ordering. Note that the full subcategory $\Hoop_\vee$ of $\vee$-hoops may be presented as the subvariety of $\Hoop$ satisfying the equation
\[
\big ((x \to y) \to z \big ) \wedge \big ((y \to x) \to z \big ) = z.
\]
A hoop is called \emph{bounded} if it admits a bottom element with respect to the natural order. In what follows, we will denote the category of bounded hoops by $\BoundedHoop$, where morphisms are morphisms of hoops which preserve the bottom element. Bounded hoops may also be presented as a variety, where one appends the constant $0$ to the signature of $\Hoop$ together with the equation $0 \to x = 1$. As usual, we write $\neg x=x\to 0$. The canonical two-element algebra $\2 = \{0,1\}$ is the initial object in $\BoundedHoop$. Finally, let us denote by $\BoundedHoop_\vee$ the category of bounded hoops in $\Hoop_\vee$.

\subsection{Idempotents and canonical quotients}
In a bounded hoop $X$, two elements $e_1, e_2$ are called \emph{weakly complementary central elements}  if there exists a morphism $f:\2 \times \2 \to X$ with 
\[
e_1 = f(1,0) \quad \text{ and } \quad e_2 = f(0,1). 
\]
In the terminology of \cite{Botero2021}, complementary central pairs are
those pairs associated with product decompositions. The adjective
``weakly'' indicates that the condition above alone need not yield a
product decomposition; Lemma~\ref{lemma: weakly complementary central}
identifies precisely when it does.

Note that two such elements $e_1$ and $e_2$ are idempotent and we have 
\begin{align*}
 e_1 \to e_2 = e_2, \quad 
 e_2 \to e_1 = e_1,  \quad
 e_1 \to 0 = e_2, \quad
 e_2 \to 0 = e_1, \quad
 e_1 \sqcup e_2 = 1,  \quad
 e_1 e_2 = 0. 
\end{align*}
Given any element $e$ in a (bounded) hoop $X$, the set $e \to X$ is the set 
\[
\{e \to x \mid x \in X\}.
\] 
For an idempotent $e$, the homomorphism property of \cite{VeroffSpinks2004} will show that $e\to X$ is a subhoop of $X$.

Recall the following elementary fact (\cite[Equation~(61)]{VeroffSpinks2004}), whose proof we include for completeness.
\begin{lemma} \label{lemma: idempotent}
Let $e$ be any idempotent in a hoop $X$, then $e\wedge x = e\cdot x$ and $e \wedge (-)$ is left adjoint to $e \to (-)$.
\end{lemma}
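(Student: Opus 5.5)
We first establish the residuation law in every hoop, $x\cdot y\leqslant z \iff x\leqslant y\to z$, and then derive both claims from it together with the idempotency of $e$.

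\textbf{Residuation.} By the second axiom, $(x\cdot y)\to z = x\to(y\to z)$. By the definition of the natural order, $x\leqslant y\to z$ means $x\to(y\to z)=1$, while $x\cdot y\leqslant z$ means $(x\cdot y)\to z=1$. These two equations say the same thing, so the equivalence is immediate. We also need two standard facts: multiplication is monotone, and $x\cdot y\leqslant x$. These follow from $x\to 1=1$ and residuation. Since the paper takes the meet $x\wedge y=(y\to x)\cdot y$ as given, we may assume $\wedge$ is the greatest lower bound.

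\textbf{First claim: $e\wedge x=e\cdot x$.} We prove two inequalities.
\begin{itemize}[label={}]
\item \emph{$e\cdot x\leqslant e\wedge x$.} We have $e\cdot x\leqslant e$ and $e\cdot x\leqslant x$, since products lie below their factors. Hence $e\cdot x$ is a lower bound of $e$ and $x$, so $e\cdot x\leqslant e\wedge x$.
\item \emph{$e\wedge x\leqslant e\cdot x$.} Write $m=e\wedge x$. Then $m\leqslant e$ and $m\leqslant x$. Monotonicity gives $m\cdot m\leqslant e\cdot x$, which is not yet enough. Instead, use the defining term: $e\wedge x=(e\to x)\cdot e$, computed with the roles chosen so that $e$ appears as a factor (by commutativity of meet). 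Then
\[
(e\to x)\cdot e = (e\to x)\cdot e\cdot e = \bigl((e\to x)\cdot e\bigr)\cdot e,
\]
using $e=e\cdot e$. Since $(e\to x)\cdot e\leqslant x$, by residuation applied to $e\to x\leqslant e\to x$, monotonicity yields $\bigl((e\to x)\cdot e\bigr)\cdot e\leqslant x\cdot e$.
\end{itemize}
This establishes equality. Choosing the correct orientation of the meet term so that $e$ appears as a factor is the only delicate point in the whole proof.

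\textbf{Second claim: adjunction.} We must show $e\wedge y\leqslant x \iff y\leqslant e\to x$. By the first claim, $e\wedge y=e\cdot y$. By commutativity and residuation, $e\cdot y\leqslant x \iff y\leqslant e\to x$, which is exactly the required adjunction. Both maps are monotone, by monotonicity of multiplication and of $e\to(-)$, and the latter follows from residuation.
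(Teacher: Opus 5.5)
Your proof is correct and follows essentially the paper's argument: the paper likewise writes $e\wedge x=e\cdot(e\to x)$, uses idempotency to get $e\cdot(e\wedge x)=e\wedge x$, multiplies $e\wedge x\leqslant x$ by $e$ to obtain $e\wedge x\leqslant e\cdot x$, and then reads the adjunction off residuation. The orientation of the meet term is not actually delicate, since the third hoop axiom already gives $(e\to x)\cdot e=(x\to e)\cdot x$.
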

\begin{proof}
We always have $e\cdot x\leqslant e \wedge x$ and for the reverse we have $e\wedge x = e\cdot (e\to x)$ so that $e\cdot (e\wedge x) = e \wedge x$ and hence 
\[
e \wedge x \leqslant x \implies e\cdot (e \wedge x) \leqslant e\cdot x \implies e \wedge x \leqslant e\cdot x.  
\]
Therefore, with respect to $e$, the residuation property 
\[
x \wedge e \leqslant y \quad \text{iff} \quad x \leqslant e \to y
\]
expresses that $e \to (-)$ is right adjoint to $(-) \wedge e$. 
\end{proof}

We now associate with every idempotent $e$ a canonical quotient hoop $[e\to X]$.
\begin{proposition}\label{prop:filter_is_hoop}
Given any idempotent $e$ in a (bounded) hoop $X$, the set $e \to X$ is a (bounded) hoop and $\rho^X_e:X \to [e \to X]$ defined by $\rho^X_e(x) = e\to x$ is a morphism of (bounded) hoops. 
\end{proposition}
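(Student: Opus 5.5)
The plan is to show directly that $\rho^X_e(x)=e\to x$ preserves the hoop operations $\cdot$, $\to$ and $1$. Once this is done, $e\to X$ is the image of $X$ under a map preserving all basic operations, hence a subuniverse of $X$. Since hoops form a variety defined by equations, this subuniverse is a hoop, and $\rho^X_e$ is a hoop morphism onto it. The bounded case is then handled separately, by equipping $e\to X$ with a suitable bottom constant.

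Preservation of $1$ is immediate from $e\to 1=1$. For implication, I will use the axiom $a\to(b\to c)=(ab)\to c$ together with Lemma~\ref{lemma: idempotent}, which gives $e\cdot(e\to x)=e\wedge x=e\cdot x$. On one side,
\[
(e\to x)\to(e\to z)=\big((e\to x)\cdot e\big)\to z=(e\wedge x)\to z .
\]
On the other side,
\[
e\to(x\to z)=(e\cdot x)\to z=(e\wedge x)\to z .
\]
Hence $\rho_e(x\to z)=\rho_e(x)\to\rho_e(z)$.

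Multiplicativity, $e\to xy=(e\to x)(e\to y)$, is the step I expect to be the main obstacle, since in a hoop $z\leqslant a$ and $z\leqslant b$ do not give $z\leqslant ab$.

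For the inequality $\geqslant$, I use the adjunction of Lemma~\ref{lemma: idempotent}. It suffices to show $e\cdot(e\to x)(e\to y)\leqslant xy$. Since $e$ is idempotent, we can split $e=e\cdot e$ and compute
\[
e\cdot(e\to x)(e\to y)=\big(e(e\to x)\big)\big(e(e\to y)\big)\leqslant xy .
\]

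For the inequality $\leqslant$, I use the preservation of $\to$ just proved. From $y\leqslant x\to xy$ and monotonicity of $e\to(-)$ we get
\[
e\to y\;\geqslant\; e\to(x\to xy)=(e\to x)\to(e\to xy).
\]
Multiplying by $e\to x$ gives
\[
(e\to x)(e\to y)\;\geqslant\;(e\to x)\big((e\to x)\to(e\to xy)\big)=(e\to x)\wedge(e\to xy).
\]
Finally, $xy\leqslant x$ gives $e\to xy\leqslant e\to x$, so this meet equals $e\to xy$.

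In the bounded case, note that $0$ itself need not lie in $e\to X$. I therefore take $e\to 0=\rho_e(0)$ as the constant of $e\to X$. It is the bottom element there, because $0\leqslant x$ implies $e\to 0\leqslant e\to x$ by monotonicity. Thus the equation $0\to x=1$ holds in $e\to X$ with this interpretation. By construction $\rho_e$ sends $0$ to this constant, so it is a morphism of bounded hoops.
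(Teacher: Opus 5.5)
Your derivation of $\rho^X_e(x\to z)=\rho^X_e(x)\to\rho^X_e(z)$ is correct, and so are the inequality $(e\to x)(e\to y)\leqslant e\to xy$ and the treatment of the bottom element. The gap is in the other half of multiplicativity, $e\to xy\leqslant (e\to x)(e\to y)$.

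From $y\leqslant x\to xy$, monotonicity of $e\to(-)$ gives
\[
e\to y\;\leqslant\; e\to(x\to xy)=(e\to x)\to(e\to xy),
\]
not $\geqslant$. You use monotonicity in the correct direction two lines later, when $xy\leqslant x$ gives $e\to xy\leqslant e\to x$. The reversed inequality is false in general: in $\mathbf{2}$ with $e=1$ and $x=y=0$, one has $e\to y=0$, while $(e\to x)\to(e\to xy)=0\to 0=1$. Multiplying the correct inequality by $e\to x$ only gives $(e\to x)(e\to y)\leqslant(e\to x)\wedge(e\to xy)=e\to xy$, which is the half you already had. So neither multiplicativity of $\rho^X_e$ nor closure of $e\to X$ under $\cdot$ is established.

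The missing inequality is exactly that closure property. Since $e\to w=e\to ew$ and $e\cdot(e\to x)(e\to y)=exy$, one has $e\to xy=e\to\big((e\to x)(e\to y)\big)$. Hence $e\to xy\leqslant(e\to x)(e\to y)$ holds precisely when $(e\to x)(e\to y)$ is a fixed point of $e\to(-)$, i.e.\ lies in $e\to X$. This is the substantive part of the statement, and the paper does not derive it by elementary residuation either. It takes the identity $e\to xy=(e\to x)(e\to y)$ from the main theorem of \cite{VeroffSpinks2004}. Your direct computation does replace the paper's appeal to Equation~(1) of that reference for the $\to$ identity. To complete the proof you must either invoke the Veroff--Spinks theorem or supply a genuine argument for $e\to xy\leqslant(e\to x)(e\to y)$.
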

\begin{proof}
For any idempotent $e$, Equation~(1) in \cite{VeroffSpinks2004} and  the main theorem therein give, respectively,
\begin{align*}
(e\to x)\to(e\to y)&=e\to(x\to y),\\
(e\to x)\cdot (e\to y)&=e\to(x\cdot y).
\end{align*}
Thus $e\to X$ is closed under implication and multiplication, and $\rho_e^X$ preserves both operations.
Finally, $\rho_e^X(1) = 1$ and if $X$ has a bottom element, then $\rho^X_e(0)$ is the bottom element of $e \to X$, since $\rho^X_e$ is surjective and monotone. 
\end{proof}

\begin{lemma} \label{lemma: weakly complementary central}
Let $X$ be any bounded hoop and $e_1, e_2$ any two weakly complementary central elements. Then the induced map 
\[
X \to [e_1 \to X] \times [e_2 \to X]
\]
is surjective. It is injective if and only if $e_1$ and $e_2$ admit a join and that join is $1$.
\end{lemma}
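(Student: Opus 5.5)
The plan is to write the induced map explicitly as $\varphi(x) = (e_1\to x,\ e_2\to x)$. By Proposition~\ref{prop:filter_is_hoop} each component $\rho^X_{e_i}$ is a morphism of bounded hoops, so $\varphi$ is one too. The whole proof then rests on three facts. First, the identities from \cite{VeroffSpinks2004} already used in that proof: $e\to(x\cdot y)=(e\to x)\cdot(e\to y)$ and $(e\to x)\to(e\to y)=e\to(x\to y)$ for idempotent $e$. Second, the relation $e_1e_2=0$ recorded after the definition of weakly complementary central elements. Third, the axiom $x\to(y\to z)=(xy)\to z$.

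For surjectivity, take an arbitrary element $(e_1\to a,\ e_2\to b)$ of the product and use the candidate preimage $x=(e_1\to a)\cdot(e_2\to b)$. The homomorphism property gives
\[
e_1\to x=\bigl(e_1\to(e_1\to a)\bigr)\cdot\bigl(e_1\to(e_2\to b)\bigr).
\]
The first factor is $e_1e_1\to a=e_1\to a$ because $e_1$ is idempotent. The second factor is $e_1e_2\to b=0\to b=1$. Hence $e_1\to x=e_1\to a$. The symmetric computation gives $e_2\to x=e_2\to b$, so $x$ is a preimage.

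For injectivity, suppose first that $e_1\vee e_2$ exists and equals $1$, and that $\varphi(x)=\varphi(y)$. Then
\[
e_1\to(x\to y)=(e_1\to x)\to(e_1\to y)=1,
\]
so $e_1\leqslant x\to y$. In the same way $e_2\leqslant x\to y$. Therefore $1=e_1\vee e_2\leqslant x\to y$, i.e.\ $x\leqslant y$. By symmetry $y\leqslant x$, so $x=y$.

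Conversely, suppose $\varphi$ is injective and let $u$ be any upper bound of $e_1$ and $e_2$. Then $e_i\to u=1=e_i\to 1$ for $i=1,2$, so $\varphi(u)=\varphi(1)$ and hence $u=1$. Thus $1$ is the only upper bound of $\{e_1,e_2\}$, which means the join exists and equals $1$.

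I expect the only real obstacle to be finding the right preimage in the surjectivity step. Once $x=(e_1\to a)(e_2\to b)$ is chosen, everything reduces to the cited identities together with $e_1e_2=0$.
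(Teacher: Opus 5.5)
Your proof is correct and has the same structure as the paper's. The only local differences are two. For surjectivity you use the witness $(e_1\to a)\cdot(e_2\to b)$ together with the multiplicativity of $e\to(-)$ for idempotent $e$ (the identity already used in Proposition~\ref{prop:filter_is_hoop}), where the paper uses $(e_1\to a)\wedge(e_2\to b)$ and meet-preservation coming from the adjunction in Lemma~\ref{lemma: idempotent}. For the converse direction of injectivity, your observation that every upper bound $u$ of $e_1,e_2$ satisfies $\varphi(u)=\varphi(1)$ is slightly more direct: the paper instead first upgrades the map to an isomorphism (using surjectivity) and then transports the join $(1,e_1)\vee(e_2,1)=(1,1)$ back to $X$.
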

\begin{proof}
By Lemma~\ref{lemma: idempotent}, the map $x \mapsto e\to x$ preserves finite meets, since it is a right adjoint.
For surjectivity, note that for any pair $(e_1 \to x, e_2 \to y)$ we may define $z = (e_1 \to x) \wedge (e_2 \to y)$ so that 
\begin{align*}
    e_1 \to z &= e_1 \to \big ((e_1 \to x) \wedge (e_2 \to y)\big) \\ 
    &= (e_1 \to (e_1 \to x))\wedge (e_1 \to (e_2 \to y)) \\
    &= (e_1 \to x) \wedge (e_1\cdot e_2 \to y) \\
    &= (e_1 \to x) \wedge (0 \to y) = e_1 \to x
\end{align*}
and similarly $e_2 \to z = e_2 \to y$, hence $(e_1 \to z, e_2 \to z) = (e_1 \to x, e_2 \to y)$. 

For injectivity, if $e_1 \vee e_2 = 1$ then $(e_1 \to x, e_2 \to x) = (e_1 \to y, e_2 \to y)$ implies that 
\begin{align*}
(e_i \to x) \to (e_i \to y) = 1 \implies e_i \to (x \to y) = 1 \implies e_i \leqslant x \to y
\end{align*}
so that $1 = e_1 \vee e_2 \leqslant x \to y$, which implies $x \leqslant y$ and similarly $y \leqslant x$. On the other hand, if $x \mapsto (e_1 \to x, e_2 \to x)$ is injective, and hence an isomorphism, then note that under this map $e_1$ is sent to $(1, e_1)$ and $e_2$ is sent to $(e_2, 1)$, whose join is $(1,1)$ and hence $e_1 \vee e_2 = 1$. 
\end{proof}

Thus a weakly complementary central pair is a complementary central pair
in the sense of \cite{Botero2021} precisely when $e_1\vee e_2=1$.

\begin{lemma} \label{lemma: idempotent pushout}
Let $f \colon A \times B \to X$ be any homomorphism in $\BoundedHoop$ and suppose $e = f(1,0)$. Then
\[
\xymatrix{
A \times B \ar[d]_f \ar[r]^{\pi_1} & A \ar[d]^{\bar f} \\
X \ar[r]_-{\rho^X_e} & [e\to X]
}
\]
where $\bar f(a)=f(a,1)$, is a pushout.
\end{lemma}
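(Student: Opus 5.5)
The plan is to verify commutativity directly, and then get the universal property by restricting a competing cocone along the inclusion $e\to X\subseteq X$. Surjectivity of $\rho^X_e$ (Proposition~\ref{prop:filter_is_hoop}) will give uniqueness.

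\emph{Well-definedness and commutativity.} In $A\times B$ the element $(1,0)$ is idempotent, and for every $(a,b)$ we have
\[
(1,0)\to(a,b)=(1\to a,\,0\to b)=(a,1).
\]
Since $f$ preserves $\to$, this gives
\[
e\to f(a,b)=f\big((1,0)\to(a,b)\big)=f(a,1)=\bar f(a).
\]
Three consequences follow. First, $\rho^X_e f=\bar f\pi_1$. Second, $\bar f$ lands in $e\to X$. Third, $\bar f$ preserves $\cdot$, $\to$ and $1$, because $a\mapsto (a,1)$ preserves these operations. For the bottom we note that $\bar f(0)=e\to f(0,0)=e\to 0$, which is the bottom of $[e\to X]$ by Proposition~\ref{prop:filter_is_hoop}. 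Hence $\bar f$ is a morphism in $\BoundedHoop$. This step is the one point that needs care, since $a\mapsto(a,1)$ is not itself a bounded morphism.

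\emph{Universal property.} Let $g\colon A\to Y$ and $h\colon X\to Y$ be morphisms of bounded hoops with $g\pi_1=hf$. Define $k\colon[e\to X]\to Y$ as the restriction of $h$ to the subhoop $e\to X$. Then $k$ preserves $\cdot$, $\to$ and $1$ automatically.

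The key observation is that $h(e)=hf(1,0)=g\pi_1(1,0)=g(1)=1$. From this:
\begin{itemize}[nosep]
\item $k$ preserves the bottom, since $k(e\to 0)=h(e)\to h(0)=1\to 0=0$;
\item $k\rho^X_e=h$, since $h(e\to x)=h(e)\to h(x)=1\to h(x)=h(x)$;
\item $k\bar f=g$, since $k\bar f(a)=hf(a,1)=g\pi_1(a,1)=g(a)$.
\end{itemize}
Uniqueness of $k$ follows because $\rho^X_e$ is surjective, hence an epimorphism.
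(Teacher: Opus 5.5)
Your proof is correct, and it takes a somewhat different route from the paper.

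The paper never constructs the mediating map. Instead it does three things:
\begin{itemize}
\item it shows that $\ker(\rho^X_e)$ is the congruence generated by the single pair $(e,1)$, via $(e,1)\to(x,x)=(e\to x,x)$;
\item it notes that the image of the kernel pair of $\pi_1$ under $(fp_1,fp_2)$ contains $(e,1)$ and lies in $\ker(\rho^X_e)$, so that $\rho^X_e$ is the coequaliser of $fp_1,fp_2$;
\item it uses the general fact that the pushout of the regular epimorphism $\pi_1$ along $f$ is this coequaliser.
\end{itemize}

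You instead verify the universal property by hand. From $h(e)=1$ you get $h(e\to x)=h(x)$, so $h$ factors through $\rho^X_e$ as its restriction to the subhoop $e\to X$, and surjectivity of $\rho^X_e$ gives uniqueness. The underlying algebra is the same in both proofs: once $e$ is identified with $1$, each $x$ must be identified with $e\to x$.

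Your version avoids congruence generation and the kernel-pair/coequaliser lemma, and it exhibits the comparison morphism explicitly. It also checks points the paper leaves implicit: that $\bar f$ lands in $e\to X$, that the square commutes, and that $\bar f$ sends $0$ to the bottom $e\to 0$ of $[e\to X]$, even though $a\mapsto(a,1)$ does not preserve bottoms.

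The paper's route, in return, records the structural fact that $[e\to X]$ is the quotient of $X$ by the principal congruence generated by $(e,1)$. It also fits the general categorical pattern of computing pushouts along regular epimorphisms as coequalisers.
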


\begin{proof}
First note that $\ker(\rho_e^X)$ is the congruence generated by $(e,1)$. Indeed, $\rho_e^X(e)=\rho_e^X(1)=1$, while, if $e\to x=e\to y$, then
\[
(e,1)\to(x,x)=(e\to x,x)
\quad\text{and}\quad
(e,1)\to(y,y)=(e\to y,y)
\]
show that $(x,y)$ belongs to the congruence generated by $(e,1)$.

Let $(p_1,p_2)$ be the kernel pair of $\pi_1$. The pair $((1,0),(1,1))$ belongs to this kernel pair and its image under $(fp_1,fp_2)$ is $(e,1)$. Conversely, if $\pi_1(a,b)=\pi_1(a,b')$, then
\[
\rho_e^X f(a,b)=f\big((1,0)\to(a,b)\big)=f(a,1)=\rho_e^X f(a,b').
\]
Consequently, the congruence generated by the pairs in the image of $(fp_1,fp_2)$ is precisely $\ker(\rho_e^X)$. Thus $\rho_e^X$ is the coequaliser of $fp_1$ and $fp_2$, which gives the required pushout.
\end{proof}

\section{Coextensive categories of bounded hoops}
Recall that for any category $\C$ with finite products which admits pushouts along product projections, the canonical functor 
\[
(X_1\downarrow \C) \times (X_2 \downarrow \C) \to (X_1 \times X_2 \downarrow \C) \tag{$*$}
\]
is right adjoint to the functor obtained from pushing out a morphism $X_1 \times X_2 \to A$ along the canonical product projections $X_1 \leftarrow X_1 \times X_2 \to X_2$. Dually to the notion of an extensive category \cite[Proposition~2.2]{CarboniLackWalters1993}, the category $\C$ is called \emph{coextensive} if this functor is an equivalence of categories. Alternatively, coextensivity may be formulated internally with respect to its morphisms. A morphism $f:A \to X$ is called \emph{coextensive} \cite[Section~3]{HoefnagelTheart2025} if it satisfies the following two conditions: it admits pushouts along the projections of every product diagram $A_1\leftarrow A\to A_2$, and the resulting pushouts form a product diagram; and in every commutative diagram
\[\xymatrix{
      A_1  \ar[d]_{} &  A \ar[d]^f \ar[r] \ar[l] & A_2 \ar[d]\\
      X_1  & X \ar[r] \ar[l] & X_2
}\]
where both rows are product diagrams, the squares are pushouts. Then $\C$ is \emph{coextensive} if it has finite products and every morphism in $\C$ is coextensive.

The variety $\BoundedHoop$ is not coextensive. Indeed, identifying the elements $e_1 = (1,0)$ and $e_2 = (0,1)$, consider the embedding
\[\scalebox{0.75}{$
\xymatrix{
& & \\ 
& \fbox{$1$} & \\
\fbox{$e_1$} \ar@{-}[ur] & & \fbox{$e_2$} \ar@{-}[ul] &\\
& \fbox{$0$} \ar@{-}[ul] \ar@{-}[ur] & &
}
$}
\qquad
\scalebox{1.4}{$
\xymatrix{
 \\ 
\lhook\joinrel\longrightarrow
}
$}
\qquad
\scalebox{0.75}{$
\xymatrix{
& & \fbox{$1$} & \\
&  & m \ar@{-}[u] & \\
& \fbox{$e_1$} \ar@{-}[ur] & & \fbox{$e_2$} \ar@{-}[ul] \\
& & \fbox{$0$} \ar@{-}[ul] \ar@{-}[ur] &
}
$}\]
being coextensive would result in a non-trivial product decomposition of the five-element bounded Heyting semilattice on the right. Recall from \cite{Broodryk2019} that a category $\C$ with finite products is \emph{left-coextensive} if the functor $(*)$ is fully faithful, which in turn is equivalent to the requirement that for any two morphisms $f:A \to X$ and $g:B \to Y$ the squares
\[
\xymatrix{
A \ar[d]^f & \ar[l] A \times B \ar[r] \ar[d]^{f\times g} & B \ar[d]^g \\
X & \ar[l]X \times Y \ar[r] & Y
}
\]
are pushouts. 

\begin{proposition} \label{proposition: left-coextensive}
The category $\BoundedHoop$ is left-coextensive. 
\end{proposition}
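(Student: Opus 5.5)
We verify the pushout criterion recalled just before the statement: for arbitrary morphisms $f\colon A\to X$ and $g\colon B\to Y$ in $\BoundedHoop$, the square with $\pi_1\colon A\times B\to A$, $f\times g$, $\pi_1\colon X\times Y\to X$ and $f$ is a pushout. The right-hand square is symmetric. The plan is to reduce this to Lemma~\ref{lemma: idempotent pushout}.

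\textbf{Step 1: reduce to the lemma's square.} Set $h=f\times g\colon A\times B\to X\times Y$ and $e=h(1,0)=(f(1),g(0))=(1,0)$. Lemma~\ref{lemma: idempotent pushout} says the square with top $\pi_1\colon A\times B\to A$, left $h$, bottom $\rho^{X\times Y}_{e}\colon X\times Y\to[e\to X\times Y]$ and right $\bar h(a)=h(a,1)=(f(a),1)$ is a pushout. So it suffices to produce an isomorphism $\varphi\colon[e\to X\times Y]\to X$ with
\[
\varphi\circ\rho^{X\times Y}_e=\pi_1
\qquad\text{and}\qquad
\varphi\circ\bar h=f .
\]
Then the square in question is isomorphic, under $A$ and $X\times Y$, to a pushout, and is therefore itself a pushout.

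\textbf{Step 2: compute $e\to(X\times Y)$.} Operations in the product are computed componentwise, so
\[
(1,0)\to(x,y)=(1\to x,\;0\to y)=(x,1).
\]
Hence $e\to(X\times Y)=\{(x,1)\mid x\in X\}$ and $\rho_e(x,y)=(x,1)$. Define $\varphi(x,1)=x$. This is plainly a bijection. It preserves the operations $\cdot$, $\to$ and $1$ componentwise, and it preserves the bottom: by Proposition~\ref{prop:filter_is_hoop} the bottom of $[e\to X\times Y]$ is $\rho_e(0,0)=(0,1)$, which $\varphi$ sends to $0$. So $\varphi$ is an isomorphism in $\BoundedHoop$. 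The two required identities are immediate:
\[
\varphi\rho_e(x,y)=x
\qquad\text{and}\qquad
\varphi\bar h(a)=f(a).
\]

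\textbf{Step 3: the other square.} The right-hand square follows by the same argument with $e=(0,1)$, or simply by symmetry of the product.

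\textbf{Where the difficulty lies.} There is essentially no obstacle once Lemma~\ref{lemma: idempotent pushout} is available; all of the real content sits in that lemma's coequaliser argument. The only point needing care is that $\varphi$ preserves the bottom element. Proposition~\ref{prop:filter_is_hoop} handles this by identifying the bottom of $[e\to X\times Y]$ as $\rho_e(0,0)$.
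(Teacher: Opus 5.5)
Your proof is correct and is essentially the paper's argument: apply Lemma~\ref{lemma: idempotent pushout} to $f\times g$, observe that $(f\times g)(1,0)=(1,0)$, identify $\rho^{X\times Y}_{(1,0)}$ with the product projection via $(x,1)\mapsto x$, and handle the second square symmetrically. Your write-up additionally spells out the isomorphism $\varphi$ and checks that it preserves the bottom element, details the paper leaves implicit.
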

\begin{proof}
This is a consequence of Lemma~\ref{lemma: idempotent pushout}; given two homomorphisms $f:A \to X$ and $g:B \to Y$, the morphism $f\times g$ sends $(1,0)$ to $(1,0)$ and $\rho_{(1,0)}^{X\times Y}$ is isomorphic to the canonical product projection. The same argument applies to the second projection.
\end{proof}

\begin{proposition} \label{proposition: coextensive morphism}
For any morphism $f:X \to A$ in $\BoundedHoop$, we have that $f$ is coextensive if and only if for any weakly complementary central elements $e_1$ and $e_2$ of $X$, if $e_1 \vee e_2 = 1$ then $f(e_1) \vee f(e_2) = 1$.   
\end{proposition}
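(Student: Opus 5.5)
The plan is to reduce both clauses in the definition of a coextensive morphism to statements about idempotents, using Lemma~\ref{lemma: idempotent pushout} and Lemma~\ref{lemma: weakly complementary central}.

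\emph{Step 1: reduce to idempotent pairs.} A product diagram $X_1 \leftarrow X \to X_2$ in $\BoundedHoop$ is the same as an isomorphism $X\cong X_1\times X_2$. Transporting $(1,0)$ and $(0,1)$ along this isomorphism gives weakly complementary central elements $e_1,e_2$ of $X$. By Lemma~\ref{lemma: weakly complementary central} these satisfy $e_1\vee e_2=1$. Conversely, every such pair arises this way, with $X_i\cong[e_i\to X]$ and the projections identified with $\rho^X_{e_i}$. Applying Lemma~\ref{lemma: idempotent pushout} to the composite $X_1\times X_2\cong X\xrightarrow{f}A$ shows that the pushouts of $f$ along the two projections exist and are, up to isomorphism,
\[
\rho^A_{f(e_1)}\colon A\to[f(e_1)\to A], \qquad \rho^A_{f(e_2)}\colon A\to[f(e_2)\to A].
\]
Moreover $f(e_1),f(e_2)$ are weakly complementary central in $A$, via $f$ composed with the map $\2\times\2\to X$.

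\emph{Step 2: the first clause.} The first condition of coextensivity now says exactly that $A\to[f(e_1)\to A]\times[f(e_2)\to A]$ is an isomorphism. This map is always surjective by Lemma~\ref{lemma: weakly complementary central}, and it is injective iff $f(e_1)\vee f(e_2)=1$. Hence the first clause holds for every product decomposition of $X$ if and only if the stated condition on $f$ holds. In particular, the condition is necessary.

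\emph{Step 3: the second clause, assuming the condition.} Consider a commutative diagram whose top row is a product decomposition of $X$ (giving $e_1,e_2$ as in Step 1) and whose bottom row $A_1\xleftarrow{q_1}A\xrightarrow{q_2}A_2$ is a product decomposition of $A$. Let $d_1,d_2$ be the corresponding weakly complementary central elements of $A$.
\begin{itemize}
\item Commutativity gives $q_i(f(e_j))=q_i$ applied to the image of the projection of $e_j$, so $q_1(f(e_1))=1$ and $q_1(f(e_2))=0$, and symmetrically for $q_2$.
\item Under $A\cong A_1\times A_2$ this forces $f(e_1)=(1,0)=d_1$ and $f(e_2)=d_2$.
\item Hence $q_i$ is isomorphic to $\rho^A_{f(e_i)}$, and by Step 1 each square is a pushout.
\end{itemize}
In this step the condition on $f$ is not even needed, since uniqueness of the central pair attached to a product decomposition suffices.

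\emph{Main obstacle.} The hard part is the identification, in Steps 1 and 3, of an arbitrary product cone with the canonical $\rho$-maps. One must check that the induced vertical maps in the squares agree with the $\bar f$ of Lemma~\ref{lemma: idempotent pushout} up to the isomorphisms. This should follow from commutativity together with the surjectivity of the $\rho$-maps, which makes these isomorphisms unique.
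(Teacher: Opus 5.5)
Your proof is correct and takes the same route as the paper. Both arguments identify product decompositions of $X$ with weakly complementary central pairs whose join is $1$, compute the pushouts as the maps $\rho^A_{f(e_i)}$ via Lemma~\ref{lemma: idempotent pushout}, and read off the product condition from Lemma~\ref{lemma: weakly complementary central}; your Step~3 reproves inline what the paper gets by citing Proposition~\ref{proposition: left-coextensive}, and you are right that it needs no hypothesis on $f$.
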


\begin{proof}
Suppose first that $f$ is coextensive, and let $e_1$ and $e_2$ be weakly complementary central elements of $X$ with $e_1 \vee e_2 = 1$. It then follows that in the diagram
\[
\xymatrix{
[e_1 \to X] \ar[d] & X \ar[r] \ar[l] \ar[d]^f & [e_2 \to X] \ar[d]\\ 
[f(e_1) \to A] & A \ar[r] \ar[l] & [f(e_2) \to A]
}
\]
the top row is a product diagram by Lemma~\ref{lemma: weakly complementary central}, the squares are pushouts by Lemma~\ref{lemma: idempotent pushout}, so that the bottom row is a product diagram, and hence $a \mapsto (f(e_1) \to a, f(e_2) \to a)$ is an isomorphism. Under this map, $f(e_1)$ gets mapped to $(1, f(e_1))$ and $f(e_2)$ gets mapped to $(f(e_2), 1)$, so that $f(e_1) \vee f(e_2) = 1$. 

Conversely, suppose that $f$ has the stated preservation property, and let $X_1\leftarrow X\to X_2$ be any product diagram. Via the induced isomorphism $X\cong X_1\times X_2$, let $e_1$ and $e_2$ correspond to $(1,0)$ and $(0,1)$, respectively. These elements are weakly complementary and satisfy $e_1\vee e_2=1$, so $f(e_1)\vee f(e_2)=1$. Lemmas~\ref{lemma: weakly complementary central} and~\ref{lemma: idempotent pushout} show that the pushouts of $f$ along the two product projections form a product. Thus $f$ satisfies the first condition for coextensivity; the second follows from Proposition~\ref{proposition: left-coextensive}. Therefore $f$ is coextensive.
\end{proof} 

\begin{remark}\label{remark: Pierce}
Bounded hoops form a Pierce variety: the term
\[
U(x,y,z,w)=(z\to y)\cdot(w\to x)
\]
satisfies $U(x,y,0,1)=x$ and $U(x,y,1,0)=y$. A Pierce variety is coextensive if and only if complementary central pairs are preserved by every homomorphism \cite[Theorem~1]{Botero2021}. As already noticed above, by Lemma~\ref{lemma: weakly complementary central}, a weakly complementary central pair in a bounded hoop is a complementary central pair precisely when its elements admit a join and that join is $1$. Thus Proposition~\ref{proposition: coextensive morphism} is the stability-by-complements condition in this setting.
\end{remark}

A variety has the Fraser--Horn property if every congruence on a direct product is a product congruence \cite{FraserHorn1970}. By \cite[Proposition~7.2]{Hoefnagel2020}, together with \cite[Proposition~3.30]{HoefnagelTheart2025}, this is equivalent to every surjective homomorphism in the variety being coextensive. Since both $\Hoop$ and $\BoundedHoop$ admit the majority term
\[
m(x,y,z)=(x\sqcup y)\wedge(x\sqcup z)\wedge(y\sqcup z),
\]
they are congruence-distributive (equivalently, majority categories in the sense of \cite{Hoefnagel2019MajorityCategories}) and hence have the Fraser--Horn property. The conclusion below also follows directly from Proposition~\ref{proposition: coextensive morphism}.
\begin{corollary} \label{cor: surjective_coextensive}
Every surjective homomorphism in $\BoundedHoop$ is coextensive. 
\end{corollary}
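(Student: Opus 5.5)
The plan is to use the criterion of Proposition~\ref{proposition: coextensive morphism}. For a surjective morphism $f\colon X\to A$ it suffices to show the following: whenever $e_1,e_2$ are weakly complementary central elements of $X$ with $e_1\vee e_2=1$, the images $f(e_1),f(e_2)$ also have join $1$ in $A$.

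\textbf{Step 1: a product decomposition of $X$.} Since $e_1\vee e_2=1$, Lemma~\ref{lemma: weakly complementary central} shows that the map
\[
x\mapsto (e_1\to x,\ e_2\to x)
\]
is an isomorphism $X\cong [e_1\to X]\times[e_2\to X]$. Under this isomorphism $e_1$ corresponds to $(1,e_1)$ and $e_2$ to $(e_2,1)$. In particular, for any upper bound $u$ of $e_1$ and $e_2$ we have $e_1\to u=1$ and $e_2\to u=1$.

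\textbf{Step 2: transfer the join to $A$.} Let $a\in A$ be an upper bound of $f(e_1)$ and $f(e_2)$. By surjectivity, write $a=f(x)$ for some $x\in X$. The upper-bound conditions say
\[
f(e_1\to x)=f(e_1)\to f(x)=1 \quad\text{and}\quad f(e_2\to x)=1 .
\]
Now take the element $z\in X$ corresponding to the pair $(e_1\to x,\,e_2\to x)$ after replacing both coordinates by $1$. Concretely, set
\[
y=(e_1\to x)\to x,
\]
or more simply use
\[
z=\bigl((e_1\to x)\wedge(e_2\to x)\bigr)\to x .
\]
The cleanest route is the following. Put $c=(e_1\to x)\wedge(e_2\to x)\in X$. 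Since $f$ preserves meets (they are term-defined), $f(c)=1$. Then $c\to x$ is mapped by $f$ to $1\to f(x)=f(x)=a$. On the other hand, using the adjunction of Lemma~\ref{lemma: idempotent} and the fact that $e_i\to(-)$ preserves meets, we get
\[
e_i\to(c\to x)=(e_i\to c)\to(e_i\to x)=(e_i\to x)\to(e_i\to x)=1,
\]
because $e_i\to c=e_i\to x$, exactly as in the surjectivity computation of Lemma~\ref{lemma: weakly complementary central}. So $e_1,e_2\leqslant c\to x$, and hence $c\to x=1$ because $e_1\vee e_2=1$. Therefore $a=f(c\to x)=1$, so the only common upper bound of $f(e_1)$ and $f(e_2)$ is $1$, that is, $f(e_1)\vee f(e_2)=1$.

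\textbf{Main obstacle.} The delicate step is the identity $e_i\to c=e_i\to x$. Expanding $c$, one has
\[
e_1\to c=(e_1\to x)\wedge\bigl(e_1\to(e_2\to x)\bigr)=(e_1\to x)\wedge(e_1e_2\to x).
\]
This relies on $e_1e_2=0$ and $0\to x=1$, both of which hold for weakly complementary central elements. Everything else is routine, and Proposition~\ref{proposition: coextensive morphism} then gives the corollary.
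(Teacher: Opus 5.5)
Your argument is correct and is essentially the paper's. The paper takes the same element $c=(e_1\to x)\wedge(e_2\to x)$, shows $e_i\to c=e_i\to x$, and invokes the injectivity half of Lemma~\ref{lemma: weakly complementary central} to get $c=x$ and hence $f(x)=f(c)=1$; you instead inline that injectivity argument by showing $e_1,e_2\leqslant c\to x$, so $c\to x=1$ and $f(x)=f(c\to x)=1$. The identity $e_i\to(c\to x)=(e_i\to c)\to(e_i\to x)$ is most cleanly cited from Proposition~\ref{prop:filter_is_hoop}, and you should delete the abandoned tentative definitions of $y$ and $z$ that precede ``the cleanest route''.
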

\begin{proof}
Let $f \colon X \to A$ be a surjective homomorphism in $\BoundedHoop$ and let $e_1, e_2$ be weakly complementary central elements in $X$ such that $e_1 \vee e_2 = 1$. We must show that $f(e_1) \vee f(e_2) = 1$ in $A$. Let $y \in A$ be any upper bound of $f(e_1)$ and $f(e_2)$. By surjectivity, there exists $x \in X$ such that $f(x) = y$. Thus, $1 = f(e_1) \to f(x) = f(e_1 \to x)$ and similarly $1 = f(e_2 \to x)$. Let $z = (e_1 \to x) \wedge (e_2 \to x)$. Since $e_i\to(-)$ preserves binary meets, we have $e_1 \to z=e_1\to x$ and, similarly, $e_2 \to z=e_2\to x$. Since $e_1 \vee e_2 = 1$, Lemma~\ref{lemma: weakly complementary central} implies the induced map into the product is injective. Because $z$ and $x$ map to the same elements, we have $z = x$. But $f(z) = f(e_1 \to x) \wedge f(e_2 \to x) = 1$, thus $y = f(x) = 1$. Consequently, $f(e_1) \vee f(e_2) = 1$, and by the preceding proposition, $f$ is coextensive.
\end{proof}

\begin{corollary} \label{cor: subvariety_coextensive}
If $\V$ is a subvariety of $\BoundedHoop$ where $e_1 \vee e_2 = 1$ for any weakly complementary central elements $e_1$ and $e_2$ of any algebra, then $\V$ is coextensive. 
\end{corollary}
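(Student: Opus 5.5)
The plan is to reduce everything to Proposition~\ref{proposition: coextensive morphism}, after first checking that its proof works inside $\V$ as well as in $\BoundedHoop$. Since $\V$ is a variety, it has finite products, and these are computed as in $\BoundedHoop$. It also has all pushouts, but pushouts in $\V$ are not in general computed as in $\BoundedHoop$. So the first step is to note that the specific pushouts used in the argument do remain pushouts in $\V$.

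Lemma~\ref{lemma: idempotent pushout} describes the pushout of $f\colon A\times B\to X$ along $\pi_1$ as $\rho^X_e\colon X\to[e\to X]$, a quotient of $X$. Quotients and subalgebras of algebras in $\V$ lie in $\V$, so $[e\to X]$ and all objects involved belong to $\V$. A square of algebras in the full subcategory $\V$ that is a pushout in $\BoundedHoop$ is automatically a pushout in $\V$, because the universal property is only tested against fewer cocones. Hence Lemma~\ref{lemma: idempotent pushout}, Proposition~\ref{proposition: left-coextensive} and Lemma~\ref{lemma: weakly complementary central} hold verbatim in $\V$. The proof of Proposition~\ref{proposition: coextensive morphism} then shows that a morphism $f\colon X\to A$ of $\V$ is coextensive in $\V$ whenever $e_1\vee e_2=1$ implies $f(e_1)\vee f(e_2)=1$ for weakly complementary central $e_1,e_2$ of $X$.

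For the second step, let $f\colon X\to A$ be any morphism in $\V$ and let $e_1,e_2$ be weakly complementary central in $X$ with $e_1\vee e_2=1$. Composing the witnessing map $g\colon\2\times\2\to X$ with $f$ gives $f\circ g\colon \2\times\2\to A$, which sends $(1,0)\mapsto f(e_1)$ and $(0,1)\mapsto f(e_2)$. So $f(e_1),f(e_2)$ are weakly complementary central elements of $A$, and $A\in\V$. By hypothesis $f(e_1)\vee f(e_2)=1$ in $A$. Therefore every morphism of $\V$ is coextensive, and since $\V$ has finite products, $\V$ is coextensive.

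The only delicate point is the first step: checking that the pushouts witnessing coextensivity in $\V$ coincide with those in $\BoundedHoop$. The whole argument rests on the explicit quotient description in Lemma~\ref{lemma: idempotent pushout} and on closure of $\V$ under quotients and products. The combinatorial part, the second step, is immediate because weak complementary centrality is defined by a morphism out of $\2\times\2$ and is therefore preserved by composition.
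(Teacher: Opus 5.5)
Your proposal is correct and follows the paper's proof. You push the witnessing map $\2\times\2\to X$ forward along $f$ to see that $f(e_1),f(e_2)$ are weakly complementary central in $A$, apply the hypothesis on $A$ and Proposition~\ref{proposition: coextensive morphism}, and note that the quotients $[e\to X]$ stay in $\V$; you also spell out, more explicitly than the paper does, why pushouts and products computed in $\BoundedHoop$ remain pushouts and products in the full subcategory $\V$.
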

\begin{proof}
If $f:X\to A$ is a morphism in $\V$, the image under $f$ of any weakly complementary pair is again weakly complementary. By the hypothesis on $A$, its join is $1$, so Proposition~\ref{proposition: coextensive morphism} applies. The canonical quotient hoops occurring in its proof are homomorphic images and therefore remain in $\V$.
\end{proof}

\begin{corollary}\label{cor: term_coextensive}
If $\V$ is a subvariety of $\BoundedHoop$ such that $\V \models (t \approx 1)$ where $t(x, y)$ is the binary term \[t(x, y) = (((x \to \neg x) \to y) ((\neg x \to x) \to y)) \to y,\] then $\V$ is coextensive.
\end{corollary}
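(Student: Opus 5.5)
We reduce to Corollary~\ref{cor: subvariety_coextensive}. It suffices to show that in every algebra $X\in\V$, any weakly complementary central elements $e_1,e_2$ satisfy $e_1\vee e_2=1$. In other words, every upper bound $y$ of $e_1$ and $e_2$ must equal $1$.

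The plan is to instantiate $t(x,y)\approx 1$ with $x=e_1$ and $y$ an arbitrary upper bound of $e_1,e_2$. From the identities listed after the definition of weakly complementary central elements we have $\neg e_1=e_1\to 0=e_2$ and $\neg e_2=e_1$. We also have $e_1\to e_2=e_2$ and $e_2\to e_1=e_1$. Hence
\[
x\to\neg x = e_1\to e_2 = e_2,\qquad \neg x\to x = e_2\to e_1 = e_1 .
\]
Therefore
\[
t(e_1,y)=\big((e_2\to y)\cdot(e_1\to y)\big)\to y .
\]
Since $e_1\leqslant y$ and $e_2\leqslant y$, both $e_1\to y$ and $e_2\to y$ equal $1$. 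Their product is then $1$, and so $t(e_1,y)=1\to y=y$. The equation $t\approx 1$ thus forces $y=1$. This holds for every upper bound $y$, so $1$ is the only upper bound of $e_1,e_2$, and hence $e_1\vee e_2$ exists and equals $1$.

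The only step needing care is the routine fact $1\to y=y$ in a hoop. It follows from $y\leqslant 1\to y$ by residuation, together with $1\to y = (1\to y)\cdot 1\leqslant y$, using $(1\to y)\cdot 1\leqslant y$ from the adjunction with $e=1$ in Lemma~\ref{lemma: idempotent}. The identities for $e_1,e_2$ were already recorded in the paper. With the hypothesis of Corollary~\ref{cor: subvariety_coextensive} verified, that corollary gives the coextensivity of $\V$. I do not expect a genuine obstacle; the main point is checking that the term $t$ was designed precisely so that substituting a complementary pair collapses it to $y$.
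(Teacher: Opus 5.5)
Your proposal is correct and matches the paper's proof: substitute $x=e_1$ and an arbitrary upper bound $y$ of $e_1,e_2$, use the recorded identities to collapse $t(e_1,y)$ to $1\to y=y$, conclude $y=1$ so that $e_1\vee e_2=1$, and then invoke Corollary~\ref{cor: subvariety_coextensive}. Your added justification of $1\to y=y$ fills a step the paper leaves implicit and is harmless.
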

\begin{proof}
Let $X \in \V$ and let $e_1$ and $e_2$ be weakly complementary central elements of $X$. Let $y \in X$ be such that $e_1, e_2 \leq y$. Then we have that 
\begin{align*}
t(e_1, y) &= \big(((e_1 \to \neg e_1) \to y)\cdot  ((\neg e_1 \to e_1) \to y)\big) \to y \\
&= \big(((e_1 \to e_2) \to y)\cdot  ((e_2 \to e_1) \to y)\big) \to y \\
&= \big((e_2 \to y)\cdot  (e_1 \to y)\big) \to y \\
&= (1\cdot 1) \to y \\
&= 1 \to y\\
&= y
\end{align*}
Since $\V \models (t \approx 1)$, it means that we should have $t(e_1, y) = y = 1$. Hence, $e_1 \vee e_2 = 1$ so that by Corollary \ref{cor: subvariety_coextensive} we can deduce that $\V$ is coextensive.
\end{proof}

\begin{corollary} \label{cor: bh_vee_coextensive}
The variety $\BoundedHoop_\vee$ is coextensive.
\end{corollary}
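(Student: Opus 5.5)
The plan is to apply Corollary~\ref{cor: subvariety_coextensive}. Since $\BoundedHoop_\vee$ is a subvariety of $\BoundedHoop$, it is enough to show the following: in every bounded $\vee$-hoop $X$, any weakly complementary central elements $e_1,e_2$ satisfy $e_1\vee e_2=1$.

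In a $\vee$-hoop the pseudojoin $\sqcup$ is the actual join for the natural order. It was recorded after the definition of weakly complementary central elements that $e_1\sqcup e_2=1$. I will recheck this directly. We have $e_1\to e_2=e_2$ and $e_2\to e_1=e_1$, which give
\[
e_1\sqcup e_2=\big((e_1\to e_2)\to e_2\big)\wedge\big((e_2\to e_1)\to e_1\big)=(e_2\to e_2)\wedge(e_1\to e_1)=1\wedge 1=1 .
\]
The identities $e_1\to e_2=e_2$ and $e_2\to e_1=e_1$ are preserved by the morphism $f\colon\2\times\2\to X$. In $\2\times\2$ one checks, for example, $(1,0)\to(0,1)=(1\to0,\,0\to1)=(0,1)$. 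Applying $f$ transfers these equalities to $X$.

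So $e_1\vee e_2=e_1\sqcup e_2=1$, and the hypothesis of Corollary~\ref{cor: subvariety_coextensive} holds. The only point to be careful about is that $\BoundedHoop_\vee$ really is a subvariety of $\BoundedHoop$. This holds because it is axiomatised by the bounded hoop axioms together with the equation defining $\Hoop_\vee$. It is therefore closed under the homomorphic images used in Corollary~\ref{cor: subvariety_coextensive}.

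As an alternative route, one could verify $t\approx 1$ and apply Corollary~\ref{cor: term_coextensive}, but the direct argument above is simpler.
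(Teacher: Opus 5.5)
Your proposal is correct and follows the paper's own proof. In a bounded $\vee$-hoop the pseudojoin is the join, weakly complementary central elements satisfy $e_1\sqcup e_2=1$, and so Corollary~\ref{cor: subvariety_coextensive} applies; the paper also mentions, as you do, the alternative route via the identity $t\approx 1$ and Corollary~\ref{cor: term_coextensive}.
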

\begin{proof}
In the variety $\BoundedHoop_\vee$, the join is given by the pseudojoin $\sqcup$, by definition, so that by the Corollary \ref{cor: subvariety_coextensive} it follows directly that $\BoundedHoop_\vee$ is coextensive. Alternatively, one can use the fact that the variety of $\BoundedHoop_\vee$ is the variety of $\BoundedHoop$ satisfying the identity
\[\big((x\to y) \to z\big)\cdot \big((y \to x) \to z\big) \to z \approx 1 \] as stated in Theorem 1.6 in \cite{AglianoFerreirimMontagna2007BasicHoops}. This allows us to deduce the same result using Corollary \ref{cor: term_coextensive}.
\end{proof}

\begin{remark}
The variety $\Hoop_\vee$ is precisely the variety of basic hoops \cite[Proposition~3.1 and Theorem~3.3]{Ferreirim2000ShortNote}. Likewise, $\BoundedHoop_\vee$ is the variety of bounded basic hoops and is term-equivalent to the variety of BL-algebras \cite[Section~2]{AglianoMontagna2003VarietiesBL}. Hence Corollary~\ref{cor: bh_vee_coextensive} also recovers the coextensivity of BL-algebras.
\end{remark}

The category $\mathbf{MV}$ was shown to be coextensive in \cite[Proposition~5.6]{MarraMenni2024}; the corollary below recovers this fact.
\begin{corollary} \label{cor: mv_coextensive}
The variety $\mathbf{MV}$ of MV-algebras is coextensive.
\end{corollary}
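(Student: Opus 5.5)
The plan is to realise $\mathbf{MV}$, up to term-equivalence, as a subvariety of $\BoundedHoop_\vee$ and then invoke Corollary~\ref{cor: subvariety_coextensive}; alternatively, one can simply exhibit the join of any weakly complementary pair directly.

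Recall that MV-algebras are term-equivalent to bounded Wajsberg hoops. These are the bounded hoops satisfying
\[
(x\to y)\to y = (y\to x)\to x,
\]
with $x\oplus y=\neg x\to y$ and $x\odot y = x\cdot y$. This correspondence identifies the morphisms and preserves the order, so coextensivity of $\mathbf{MV}$ is equivalent to coextensivity of the variety of bounded Wajsberg hoops. Coextensivity is a categorical property, hence invariant under this isomorphism of categories.

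The key step is to show that in a bounded Wajsberg hoop any weakly complementary central elements $e_1,e_2$ satisfy $e_1\vee e_2=1$. I would argue directly that in a Wajsberg hoop the pseudojoin is a genuine join. By the Wajsberg identity,
\[
x\sqcup y=(x\to y)\to y=(y\to x)\to x .
\]
This is an upper bound of $x$ and $y$. Now let $z$ be any upper bound. In any hoop, $z \ge y$ gives $(x\to y)\to y \le (x\to z)\to z$, by antitonicity in the first argument and monotonicity in the second applied carefully. Since $x\le z$ means $x\to z=1$, we get $(x\to z)\to z=1\to z=z$, so $x\sqcup y\le z$. Hence Wajsberg hoops are $\vee$-hoops.

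The rest is easy. Since $\neg e_1=e_2$, we have $e_1\sqcup e_2=(e_1\to e_2)\to e_2=e_2\to e_2=1$, as already noted in the introduction. So $e_1\vee e_2=1$, and Corollary~\ref{cor: subvariety_coextensive} applies. Equivalently, bounded Wajsberg hoops form a subvariety of $\BoundedHoop_\vee$, so one could argue via Corollary~\ref{cor: bh_vee_coextensive}. That route still needs Corollary~\ref{cor: subvariety_coextensive}, since a subvariety of a coextensive variety need not inherit coextensivity a priori; the hypothesis is inherited, so that corollary applies.

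The only delicate point is the monotonicity step. We need $(x\to y)\to y\le (x\to z)\to z$ whenever $y\le z$, but $y$ appears in both an antitone and a monotone position. To avoid this, I would instead use the second form $(y\to x)\to x$. Then $y\to x\ge z\to x$, so $(y\to x)\to x\le (z\to x)\to x$. By the Wajsberg identity, $(z\to x)\to x=(x\to z)\to z=z$. This gives the bound cleanly.
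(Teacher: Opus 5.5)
Your proposal is correct and follows the paper's route: you pass to bounded Wajsberg hoops via the term-equivalence, show that they are $\vee$-hoops so that every weakly complementary central pair has $e_1\vee e_2=e_1\sqcup e_2=1$, and apply Corollary~\ref{cor: subvariety_coextensive}. The one difference is that you prove the inclusion of bounded Wajsberg hoops in $\BoundedHoop_\vee$ directly, via $(y\to x)\to x\leqslant(z\to x)\to x=(x\to z)\to z=z$, which the paper simply asserts; you should delete your earlier claim that $y\leqslant z$ implies $(x\to y)\to y\leqslant(x\to z)\to z$ ``in any hoop'', since it already fails in the three-element Heyting chain (take $y=0$ and $x=z$ the middle element), and keep only the corrected argument.
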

\begin{proof}
MV-algebras are term-equivalent to bounded Wajsberg hoops \cite{FontRodriguezTorrens1984,AglianoMontagna2003VarietiesBL}, i.e.\ the variety of hoops satisfying the additional identity:
\[
(x\to y)\to y= (y\to x)\to x.
\]
Bounded Wajsberg hoops form a subvariety of $\BoundedHoop_\vee$ and hence satisfy the hypothesis of Corollary~\ref{cor: subvariety_coextensive}. Thus this subvariety is coextensive, and the result transfers across the term-equivalence.
\end{proof}

\begin{corollary} \label{cor: heyt_coextensive}
The category $\mathbf{Heyt}$ of Heyting algebras is coextensive. 
\end{corollary}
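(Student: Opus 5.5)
We will realise $\mathbf{Heyt}$ as (term-equivalent to) a subvariety of $\BoundedHoop$ and then apply Corollary~\ref{cor: subvariety_coextensive}, or alternatively Corollary~\ref{cor: bh_vee_coextensive}.

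The first step is the standard observation that a Heyting algebra $(H,\wedge,\vee,\to,0,1)$ gives a bounded hoop $(H,\wedge,\to,1,0)$. Here the monoid operation is the meet and the natural order is the lattice order, so the hoop axioms are the familiar Heyting identities: $x\to x=1$, $x\to(y\to z)=(x\wedge y)\to z$, and $(x\to y)\wedge x = x\wedge y = (y\to x)\wedge y$.

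Conversely, bounded hoops satisfying $x\cdot x = x$ are exactly the bounded Heyting semilattices $\BoundedHeytingSemilattices$. Among these, the ones whose order is a lattice with the join expressible by a term are exactly the Heyting algebras.

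The main point is to show that Heyting algebras are term-equivalent to the subvariety of $\BoundedHoop_\vee$ given by idempotence $x\cdot x\approx x$. We need two facts.

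\begin{itemize}[label={}, leftmargin=0pt]
\item \emph{Heyting algebras lie in this subvariety.} In a Heyting algebra, the pseudojoin $x\sqcup y=((x\to y)\to y)\wedge((y\to x)\to x)$ equals $x\vee y$. The inequality $x\vee y\leqslant x\sqcup y$ is clear. For the reverse, use distributivity and the identity $(x\to y)\vee(y\to x)$ being checked via the $\vee$-hoop equation. The cleaner route is to verify directly the defining equation $((x\to y)\to z)\wedge((y\to x)\to z)=z$. By distributivity the left side is $((x\to y)\vee(y\to x))\to z$, so it suffices to prove $(x\to y)\vee(y\to x)=1$.
\end{itemize}

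This is the expected obstacle, since $(x\to y)\vee(y\to x)=1$ is the prelinearity law, which fails in general Heyting algebras. So Heyting algebras are \emph{not} $\vee$-hoops, and Corollary~\ref{cor: bh_vee_coextensive} does not apply. I will therefore switch to Corollary~\ref{cor: subvariety_coextensive} directly.

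Take $\V$ to be the class of bounded hoops that are term-equivalent to Heyting algebras. Being a variety is the delicate point, since the join is not a hoop term in general. I would instead argue at the level of categories, through the following steps.

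\begin{enumerate}
\item Run the proof of Proposition~\ref{proposition: coextensive morphism} inside $\mathbf{Heyt}$. Lemmas~\ref{lemma: idempotent}--\ref{lemma: idempotent pushout} use only hoop operations. We additionally need that $e\to X$ is closed under joins and that $\rho_e$ preserves them.
\item For a central idempotent $e$ in a Heyting algebra, $e\to(x\vee y)=(e\to x)\vee(e\to y)$ holds. Indeed, $e\to x = \neg e\vee x$ when $e$ is complemented, and then distributivity gives the identity. With this, products and the pushouts of Lemma~\ref{lemma: idempotent pushout} computed in $\BoundedHoop$ are Heyting algebras, and the maps involved preserve $\vee$. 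Hence the same squares are pushouts in $\mathbf{Heyt}$.
\item In $\mathbf{Heyt}$, weakly complementary central elements $e_1,e_2=\neg e_1$ have the actual join $e_1\vee e_2$. Any Heyting morphism $f$ sends it to $f(e_1)\vee f(e_2)$. So if $e_1\vee e_2=1$ then $f(e_1)\vee f(e_2)=f(1)=1$, and the preservation condition of Proposition~\ref{proposition: coextensive morphism} holds automatically.
\end{enumerate}

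Finally, $\mathbf{Heyt}$ has finite products, and every morphism is coextensive by the argument of Proposition~\ref{proposition: coextensive morphism} transported to $\mathbf{Heyt}$, with left-coextensivity again coming from Lemma~\ref{lemma: idempotent pushout}. This gives coextensivity of $\mathbf{Heyt}$.

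The main obstacle is the transport step: checking that the canonical quotient $X\to[e\to X]$ preserves joins, and that pushouts in $\BoundedHoop$ along product projections remain pushouts in $\mathbf{Heyt}$. Both reduce to the formula $e\to x=\neg e\vee x$ for complemented $e$.
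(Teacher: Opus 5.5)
Your argument is correct and is essentially the paper's proof. You rightly drop the $\BoundedHoop_\vee$ route, since prelinearity fails in general Heyting algebras, and then use $e\to x=\neg e\vee x$ for complemented $e$ to get that $\rho_e$ preserves joins and $[e\to X]$ is a Heyting algebra; this puts the pushouts of Lemma~\ref{lemma: idempotent pushout} and the product argument of Proposition~\ref{proposition: coextensive morphism} inside $\mathbf{Heyt}$, while join-preservation by Heyting morphisms makes $f(e_1)\vee f(e_2)=1$ automatic, exactly as in the paper.
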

\begin{proof}
Heyting algebras may be identified with idempotent bounded hoops whose natural order admits binary joins, with morphisms preserving those joins \cite{BlokFerreirim2000StructureHoops}. The two elements arising from a product decomposition are Boolean complements, and Heyting homomorphisms preserve this property. Moreover, for any complemented element $e$ one has $e\to x=\neg e\vee x$. Hence $\rho_e$ preserves joins and $[e\to X]$ is again a Heyting algebra. Consequently, the pushouts of Lemma~\ref{lemma: idempotent pushout} and the product argument of Proposition~\ref{proposition: coextensive morphism} take place within $\mathbf{Heyt}$.
\end{proof}

For the next result, we recall from \cite{BlokFerreirim2000StructureHoops} the definition of the ordinal sum of two hoops, $A$ and $B$, which we denote by $A \oplus B$. Without loss of generality, let us suppose $A\cap B =\{1\}$. Then $A\oplus B$ is the algebra with underlying set $A\cup B$, top element $1$ and operations:
\[
x\to y = \begin{aligned}
\begin{cases}
    x\to^A y\qquad\text{for }x,y\in A\,,\\
    x\to^B y\qquad\text{for }x,y\in B\,,\\
    y\qquad\text{for }x\in B\,,y\in A\setminus\{1\}\,,\\
    1\qquad \text{for }x\in A\setminus \{1\}\,,y\in B\,,
\end{cases}    
\end{aligned}
\]
and
\[
x\cdot y = \begin{aligned}
\begin{cases}
    x\cdot^A y\qquad\text{for }x,y\in A\,,\\
    x\cdot^B y\qquad\text{for }x,y\in B\,,\\
    y\qquad\text{for }x\in B\,,y\in A\setminus \{1\}\,,\\
    x\qquad \text{for }x\in A\setminus \{1\}\,,y\in B\,,
\end{cases}    
\end{aligned}
\]
The upper summand $B$ is a filter of $A\oplus B$. Moreover, if $B$ is subdirectly irreducible, then $A\oplus B$ is subdirectly irreducible \cite{BlokFerreirim2000StructureHoops}.

\begin{corollary}\label{cor: subdirectlyirreducible_coextensive}
If $\V$ is a subvariety of $\BoundedHoop$ where every subdirectly irreducible member is of the form $\2 \oplus S$ with $S \in \Hoop$, then $\V$ is coextensive.  
\end{corollary}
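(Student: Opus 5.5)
The plan is to reduce to Corollary~\ref{cor: term_coextensive}: we show that every subdirectly irreducible member of $\V$ satisfies the identity $t(x,y)\approx 1$. Then, since every algebra in $\V$ is a subdirect product of subdirectly irreducible members, and identities are preserved under subalgebras and products, we get $\V\models t\approx 1$ and conclude.

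So fix a subdirectly irreducible $X=\2\oplus S$ with $S\in\Hoop$, and elements $x,y\in X$. The underlying set is $\{0\}\cup S$, with $0$ the bottom element (the bottom of $\2$) and $1$ shared. We compute $\neg x$ from the ordinal sum tables, splitting into cases on $x$.

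\textbf{Case $x=0$.} Here $\neg x=1$. Then $x\to\neg x=1$ and $\neg x\to x=0$, so
\[
t(0,y)=\big((1\to y)\cdot(0\to y)\big)\to y=(y\cdot 1)\to y=1 .
\]

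\textbf{Case $x\in S$.} Taking $x\in B=S$ and $0\in A\setminus\{1\}$, the table gives $x\to 0=0$, so $\neg x=0$. Then $x\to\neg x=x\to 0=0$ and $\neg x\to x=1$. Hence
\[
t(x,y)=\big((0\to y)\cdot(1\to y)\big)\to y=(1\cdot y)\to y=1 .
\]
This includes $x=1$, where $\neg 1=0$ as well.

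So $t\approx 1$ holds in every subdirectly irreducible member, and therefore in all of $\V$. Corollary~\ref{cor: term_coextensive} then gives coextensivity.

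The only delicate point is applying the ordinal sum operation tables correctly with $A=\2$ and $B=S$. In particular, one must check that $s\to 0=0$ for $s\in S$, which holds because $0\in A\setminus\{1\}$. The reduction from the whole variety to its subdirectly irreducible members is just Birkhoff's subdirect representation theorem applied to an equational property, so I expect no real obstacle there. Alternatively, one could apply Corollary~\ref{cor: subvariety_coextensive} directly, but that would require arguing about joins in subdirect products, which is less clean. The term route avoids this entirely.
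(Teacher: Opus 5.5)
Your proof is correct and follows essentially the same route as the paper: both verify $t\approx 1$ in each subdirectly irreducible factor $\2\oplus S_i$ of a subdirect representation and then invoke Corollary~\ref{cor: term_coextensive}. The only cosmetic difference is how the two cases are obtained: the paper derives ``$\pi_i(x)=0$ or $\pi_i(\neg x)=0$'' from $x\cdot\neg x=0$ and the absence of zero divisors in $\2\oplus S_i$, whereas you compute $\neg x$ directly from the ordinal-sum tables.
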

\begin{proof}
Consider elements $x, y$ of an algebra $X$ in $\V$. Let $r: X \to \prod_{i\in I}X_i$ be a subdirect representation of $X$. Since $x \cdot \neg x = 0$, we have $\pi_i(x)\cdot \pi_i(\neg x) = 0$ for all $i \in I$. As $X_i$ is of the form $\2 \oplus S_i$, this implies that $\pi_i(x) = 0$ or $\pi_i(\neg x) = 0$. Therefore, we have that \[((\pi_i(x) \to \pi_i(\neg x)) \to \pi_i(y))\cdot  ((\pi_i(\neg x) \to \pi_i(x)) \to \pi_i(y)) \to \pi_i(y) = 1.\] Since the subdirect representation is injective, we have that \[((x \to \neg x) \to y)\cdot ((\neg x \to x) \to y) \to y = 1.\] The result now follows from Corollary~\ref{cor: term_coextensive}.
\end{proof}

\begin{remark}
    For every nontrivial bounded hoop $A$, the following conditions are equivalent:
    \begin{enumerate}[label=(\roman*)]
    \item $A\cong\2 \oplus S$ for some $S \in \Hoop$.
    \item For all $x, y \in A$, if $xy = 0$ then $x=0$ or $y=0$.
    \item For all $x \in A$, if $x \neq 0$ then $x \to 0 = 0$.
    \end{enumerate}
    Indeed, $xy=0$ if and only if $y\leqslant x\to0$, so (ii) and (iii) are equivalent. Under these conditions, $S=A\setminus\{0\}$ is a subhoop and the mixed operations are precisely those of $\2\oplus S$.
\end{remark}

Idempotent hoops are precisely implicative (also called Brouwerian or Heyting) semilattices, and idempotent bounded hoops are precisely bounded implicative semilattices \cite{BlokFerreirim2000StructureHoops}. We will denote the category of bounded implicative semilattices by $\BoundedHeytingSemilattices$, and we can characterise the coextensive subvarieties of $\BoundedHeytingSemilattices$ with the following result. 

\begin{proposition}
    Let $\V$ be a subvariety of $\BoundedHeytingSemilattices$. Then the following are equivalent:
    \begin{enumerate}[label=(\roman*)]
    \item $\V$ is coextensive
    \item $\2^2 \oplus \2 \notin \V$
    \item $\V \models (t \approx 1)$ where $t(x, y)$ is the binary term 
    \begin{align*}
    t(x, y) &= ((x \to \neg x) \to y)\cdot ((\neg x \to x) \to y) \to y\\
    &= ((\neg x \to y) \wedge (\neg \neg x \to y)) \to y.
    \end{align*}
    Here the second equality uses $x\to\neg x=\neg x$ and $\neg x\to x=\neg\neg x$, valid in every idempotent bounded hoop.
    \item All subdirectly irreducible members of $\V$ are of the form $\2 \oplus S$ for some $S \in \HeytingSemilattices$.
    \end{enumerate}
\end{proposition}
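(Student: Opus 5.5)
I would prove the cycle (i)$\Rightarrow$(ii)$\Rightarrow$(iv)$\Rightarrow$(iii)$\Rightarrow$(i). Two of these steps are already available. The implication (iii)$\Rightarrow$(i) is exactly Corollary~\ref{cor: term_coextensive}. The argument of Corollary~\ref{cor: subdirectlyirreducible_coextensive} shows that (iv) makes $t\approx 1$ hold in every subdirectly irreducible member, hence in $\V$; this gives (iv)$\Rightarrow$(iii). The second expression for $t$ is routine. In an idempotent hoop, $x\cdot y=x\wedge y$, and $x\to\neg x=(x\cdot x)\to 0=\neg x$. Also $\neg x\to x=\neg\neg x$, since $\neg x\to x\leqslant \neg x\to(\neg x\cdot x\to 0)$ and conversely $\neg\neg x\cdot\neg x=0\leqslant x$.

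For (i)$\Rightarrow$(ii), I would argue by contraposition, using the example given before Proposition~\ref{proposition: left-coextensive}. The five-element algebra pictured there is exactly $\2^2\oplus\2$, and it is an idempotent bounded hoop. The inclusion $\2^2\hookrightarrow \2^2\oplus\2$ is a morphism in $\V$, because $\2^2$ is a subalgebra of $\2^2\oplus\2$. In $\2^2$ the elements $e_1=(1,0)$ and $e_2=(0,1)$ are weakly complementary with join $1$. In $\2^2\oplus\2$ their join is the middle element $m\neq 1$. By Proposition~\ref{proposition: coextensive morphism} the inclusion is therefore not coextensive, so $\V$ is not coextensive. (The pushouts in that proposition are quotients, so they stay inside $\V$.)

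The main step is (ii)$\Rightarrow$(iv). Let $A\in\V$ be subdirectly irreducible. By the preceding remark it suffices to show that $x\cdot y=0$ implies $x=0$ or $y=0$. Suppose instead that $x,y\neq 0$ and $xy=0$. I would replace them by $e_1=\neg\neg x$ and $e_2=\neg x$. These are still nonzero: $x\leqslant\neg\neg x$, and $y\leqslant\neg x$. Their meet is $0$, and $\neg e_1=\neg x=e_2$ and $\neg e_2=\neg\neg x=e_1$. From these facts, the map $\2^2\to A$ sending $(1,0)\mapsto e_1$ and $(0,1)\mapsto e_2$ is a morphism of bounded hoops. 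The required identities hold: $e_1\to e_2=\neg e_1=e_2$, $e_2\to e_1=e_1$, and $e_i\to 0=e_j$.

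Now consider the subalgebra generated by $e_1,e_2$ together with their upper bounds. I need to use subdirect irreducibility. In a subdirectly irreducible hoop there is a largest element $u<1$ (the monolith corresponds to a filter generated by a least element $\neq 1$ above a coatom; for hoops, SI means $A\setminus\{1\}$ has a maximum). We have $e_1,e_2\neq 1$: if $e_1=1$ then $e_2=\neg 1=0$, a contradiction. So $e_1,e_2\leqslant u$. Then $\{0,e_1,e_2,u,1\}$ should be a subalgebra isomorphic to $\2^2\oplus\2$. I would check the operations using idempotency: $e_1\cdot u=e_1\wedge u=e_1$, $u\to e_1=e_1$, and $u\to u=1$. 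The identity $u\to e_1=e_1$ is the main obstacle. Since $u\cdot(u\to e_1)\leqslant e_1$ gives $u\wedge(u\to e_1)\leqslant e_1$, and any $z\neq 1$ satisfies $z\leqslant u$, we get $u\to e_1\in\{1\}\cup\{z\leqslant e_1\}$. It cannot equal $1$, since $u\not\leqslant e_1$ (otherwise $e_2\leqslant e_1$, so $e_2=0$). Hence $u\to e_1\leqslant e_1$, and equality follows. The cases $e_1\to u=1$ and $e_i\cdot e_j=0$ are immediate.

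This gives $\2^2\oplus\2\in\V$, contradicting (ii). Hence every subdirectly irreducible member has the form $\2\oplus S$, with $S=A\setminus\{0\}$ an implicative semilattice by the remark, which proves (iv).
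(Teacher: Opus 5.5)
Your argument is correct, but it runs the cycle differently from the paper.

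\textbf{The paper's route.} The paper proves (ii)$\Rightarrow$(iii) in an arbitrary $X\in\V$: if $m=t(x,y)\neq 1$, then $\{0,\neg x,\neg\neg x,m,1\}$ is already a copy of $\2^2\oplus\2$, with the value of the term itself as the middle element. It then gets (iii)$\Rightarrow$(iv) by computing $t(x,\mu)=\mu$ for the largest element $\mu<1$ of a subdirectly irreducible algebra. Finally, (iv)$\Rightarrow$(i) comes from Corollary~\ref{cor: subdirectlyirreducible_coextensive}.

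\textbf{Your route.} You go directly from (ii) to (iv). You build $\{0,\neg\neg x,\neg x,u,1\}$ inside a subdirectly irreducible algebra, using the coatom $u$ as middle element. You then recover (iii) and (i) by splitting that corollary into its two halves: the subdirect-representation argument, and Corollary~\ref{cor: term_coextensive}.

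\textbf{Comparison.} Since $t(x,\mu)=\mu$, your five-element subalgebra is exactly what the paper's construction gives for the pair $(x,\mu)$, so the two proofs share the same core. The paper's route has the advantage that any failure of $t\approx 1$, in any algebra of $\V$, directly exhibits $\2^2\oplus\2$ without appeal to subdirect irreducibility. Yours never has to handle the term $t$ concretely. Instead it lets the coatom property do the order-theoretic work, as in your clean derivation of $u\to e_1=e_1$.

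\textbf{Small repairs.}
\begin{itemize}
\item Your parenthetical claim that a subdirectly irreducible hoop always has a largest element below $1$ is false for hoops in general. The standard MV-algebra $[0,1]$, viewed as a bounded Wajsberg hoop, is simple, yet $[0,1)$ has no maximum. The claim is true here because $A$ is idempotent: the filter generated by $a$ is then $\mathord{\uparrow}a$, and the monolith corresponds to the filter $\{u,1\}$. This is the fact about implicative semilattices that the paper cites.
\item You should list $u\to 0=0$ among the checks. It follows from $\neg u\leqslant\neg e_1\wedge\neg e_2=e_2\wedge e_1=0$.
\item The inequality you wrote for $\neg x\to x\leqslant\neg\neg x$ is garbled. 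It should read $\neg x\to x\leqslant\neg x\to\neg\neg x=(\neg x\cdot\neg x)\to 0=\neg\neg x$, using $x\leqslant\neg\neg x$.
\end{itemize}
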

\begin{proof}
$(i) \Rightarrow (ii).$ Suppose that $C=\2^2 \oplus \2$ belongs to $\V$. Then $\2^2$ is a subalgebra of $C$, and, writing $e_1=(1,0)$ and $e_2=(0,1)$, we have the embedding
\[\scalebox{0.75}{$
\xymatrix{
& & \\ 
& \fbox{$1$} & \\
\fbox{$e_1$} \ar@{-}[ur] & & \fbox{$e_2$} \ar@{-}[ul] &\\
& \fbox{$0$} \ar@{-}[ul] \ar@{-}[ur] & &
}
$}
\qquad
\scalebox{1.4}{$
\xymatrix{
 \\ 
\lhook\joinrel\longrightarrow
}
$}
\qquad
\scalebox{0.75}{$
\xymatrix{
& & \fbox{$1$} & \\
&  & m \ar@{-}[u] & \\
& \fbox{$e_1$} \ar@{-}[ur] & & \fbox{$e_2$} \ar@{-}[ul] \\
& & \fbox{$0$} \ar@{-}[ul] \ar@{-}[ur] &
}
$}\]
The canonical pushouts of this embedding along the two projections are the quotient maps of Lemma~\ref{lemma: idempotent pushout}; their codomains are isomorphic to $\2$ and belong to $\V$. The induced map $C\to\2^2$ identifies $m$ and $1$, so these pushouts do not form a product. Thus the embedding is not coextensive in $\V$, and hence $\V$ is not coextensive.

$(ii) \Rightarrow (iii).$ We prove the contrapositive. Suppose that $\V\not\models(t\approx1)$, and choose $X\in\V$ and $x,y\in X$ such that $m=t(x,y)\neq1$. Put $a=\neg x$, $b=\neg\neg x$, and $q=(a\to y)\wedge(b\to y)$, so that $m=q\to y$. We have $a\wedge b=0$, while $q\leqslant a\to y$ and $q\leqslant b\to y$ give $a,b\leqslant m$. Since $m\neq1$, the elements $a$ and $b$ are nonzero, proper, and incomparable; in particular,
\[
0<a,b<m<1.
\]
Moreover,
\[
\neg a=b,\quad \neg b=a,\quad a\to b=b,\quad b\to a=a,
\quad m\to a=a,\quad m\to b=b,\quad \neg m=0.
\]
Together with the displayed order, these identities show that $\{0,a,b,m,1\}$ is closed under $\wedge,\to,0,1$ and is isomorphic to $\2^2\oplus\2$. Since $\V$ is closed under subalgebras, $\2^2\oplus\2\in\V$.

$(iii) \Rightarrow (iv).$ Again we prove the contrapositive. Suppose that a subdirectly irreducible algebra $A\in\V$ is not of the form $\2\oplus S$. By the preceding remark, there exists $x\neq0$ such that $\neg x\neq0$. We have $\neg x\wedge\neg\neg x=0$ and $\neg\neg x\geqslant x\neq0$, so $\neg x$ and $\neg\neg x$ are incomparable. A subdirectly irreducible implicative semilattice has a greatest element $\mu<1$ \cite[Remark~2.4]{BezhanishviliEtAl2021}. Consequently, $\neg x,\neg\neg x<\mu$, and hence
\[
t(x,\mu)=((\neg x\to\mu)\wedge(\neg\neg x\to\mu))\to\mu
=(1\wedge1)\to\mu=\mu\neq1.
\]
Thus $\V\not\models(t\approx1)$.

$(iv) \implies (i).$ This follows from Corollary~\ref{cor: subdirectlyirreducible_coextensive}.
\end{proof}

\bibliographystyle{plain}
\bibliography{references}

\end{document}